\smallskip\begin{center}\fbox{\parbox{3in}{%
\definecolor{labelkey}{gray}{.20}
\definecolor{refkey}{gray}{.20}
\definecolor{eqkey}{gray}{.20}
\def\mathclap#1{\text{\hbox to 0pt{\hss$\mathsurround=0pt#1$\hss}}}
\newcommand{\beq}{\begin{equation}}
\newcommand{\eeq}{\end{equation}}
\newtheorem{theorem}{Theorem}
\newtheorem*{theorem*}{Theorem}
\newtheorem{thm}{Theorem}
\newtheorem*{thm*}{Theorem}
\newtheorem{lemma}[theorem]{Lemma}
\newtheorem*{lemma*}{Lemma}
\newtheorem{proposition}[theorem]{Proposition}
\newtheorem{prop}[theorem]{Proposition}
\newtheorem{cory}[theorem]{Corollary}
\theoremstyle{remark}
\newtheorem{remarks}[theorem]{Remarks}
\newcommand{\al}{{\alpha}}
\newcommand{\Om}{{\Omega}}
\newcommand{\eps}{{\varepsilon}}
\newcommand{\de}{{\delta}}
\newcommand{\ga}{{\gamma}}
\newcommand{\Ga}{{\Gamma}}
\newcommand{\si}{{\sigma}}
\renewcommand{\phi}{{\varphi}}
\newcommand\aut{\operatorname{Aut}}
\newcommand\R{\mathbb R}
\newcommand\Q{\mathbb Q}
\newcommand\Z{\mathbb Z}
\newcommand\C{\mathbb C}
\newcommand\N{\mathbb N}
\newcommand\ZZ{\mathbf Z}
\newcommand{\wh}[1]{{\widehat {#1}}}
\newcommand{\cal}[1]{{\mathcal #1}}
\newcommand*\if@single[3]{%
  \setbox0\hbox{${\mathaccent"0362{#1}}^H$}%
  \setbox2\hbox{${\mathaccent"0362{\kern0pt#1}}^H$}%
  \ifdim\ht0=\ht2 #3\else #2\fi
  }
\newcommand*\rel@kern[1]{\kern#1\dimexpr\macc@kerna}
\newcommand*\widebar[1]{\@ifnextchar^{{\wide@bar{#1}{0}}}{\wide@bar{#1}{1}}}
\newcommand*\wide@bar[2]{\if@single{#1}{\wide@bar@{#1}{#2}{1}}{\wide@bar@{#1}{#2}{2}}}
\newcommand*\wide@bar@[3]{%
  \begingroup
  \def\mathaccent##1##2{%
    \if#32 \let\macc@nucleus\first@char \fi
    \setbox\z@\hbox{$\macc@style{\macc@nucleus}_{}$}%
    \setbox\tw@\hbox{$\macc@style{\macc@nucleus}{}_{}$}%
    \dimen@\wd\tw@
    \advance\dimen@-\wd\z@
    \divide\dimen@ 3
    \@tempdima\wd\tw@
    \advance\@tempdima-\scriptspace
    \divide\@tempdima 10
    \advance\dimen@-\@tempdima
    \ifdim\dimen@>\z@ \dimen@0pt\fi
    \rel@kern{0.6}\kern-\dimen@
    \if#31
      \overline{\rel@kern{-0.6}\kern\dimen@\macc@nucleus\rel@kern{0.4}\kern\dimen@}%
      \advance\dimen@0.4\dimexpr\macc@kerna
      \let\final@kern#2%
      \ifdim\dimen@<\z@ \let\final@kern1\fi
      \if\final@kern1 \kern-\dimen@\fi
    \else
      \overline{\rel@kern{-0.6}\kern\dimen@#1}%
    \fi
  }%
  \macc@depth\@ne
  \let\math@bgroup\@empty \let\math@egroup\macc@set@skewchar
  \mathsurround\z@ \frozen@everymath{\mathgroup\macc@group\relax}%
  \macc@set@skewchar\relax
  \let\mathaccentV\macc@nested@a
  \if#31
    \macc@nested@a\relax111{#1}%
  \else
    \def\gobble@till@marker##1\endmarker{}%
    \futurelet\first@char\gobble@till@marker#1\endmarker
    \ifcat\noexpand\first@char A\else
      \def\first@char{}%
    \fi
    \macc@nested@a\relax111{\first@char}%
  \fi
  \endgroup
}
\newcommand*\mcapinn[2]{\vcenter{\hbox{$\mathsurround=0pt
  \ifx\displaystyle#1\textstyle\else#1\fi\bigcap$}}}
\newcommand*\mcupinn[2]{\vcenter{\hbox{$\mathsurround=0pt
  \ifx\displaystyle#1\textstyle\else#1\fi\bigcup$}}}
\newcommand\tr{\operatorname{tr}}
\newcommand\dimvn{\dim_\text{vN}}
\newcommand\trvn{\tr_\text{vN}}
\renewcommand{\ge}{\geqslant}
\newcommand{\wre}{\wr}
\newcommand{\lamplighter}{\ZZ_2\wr\ZZ}
\newcommand{\ring}[2]{{\mathbb #1 \!\left[ #2 \right]}}
\newcommand{\zring}[1]{{\ring{Z}{#1}}}
\newcommand{\qring}[1]{{\ring{Q}{#1}}}
\newcommand{\cring}[1]{{\ring{C}{#1}}}
\newcommand{\ckl}{\ZZ}
\begin{document}

\title[Vanishing of $l^2$-cohomology as a computational problem]
 {Vanishing of $l^2$-cohomology as a computational problem} 
\author{{\L}ukasz Grabowski}
\let\thefootnote\relax\footnote{\hspace{-18pt}\textit{Email:} \texttt{graboluk@gmail.com}}
\let\thefootnote\relax\footnote{\hspace{-18pt}{The author was supported by EPSRC at Imperial College London and Oxford University, by the EPSRC grant EP/K012045/1 at University of Warwick, and by Fondation Sciences Math\' ematiques de Paris during the program \textit{Random Walks and Asymptotic Geometry of Groups} at IHP.}}
\maketitle
\vspace{-30pt}
\begin{center}
\textit{University of Warwick, Mathematics Institute, Zeeman Building, Coventry, CV4 7AL, UK}\end{center}
\begin{abstract}
We show that it is impossible to algorithmically decide if the $l^2$-cohomology of the universal cover of a finite CW complex is trivial, even if we only consider complexes whose fundamental group is equal to the elementary amenable group $(\lamplighter)^3$. A corollary of the proof is that there is no algorithm which decides if an element of the integral group ring of the group $(\lamplighter)^4$ is a zero-divisor.  On the other hand, we show, assuming some standard conjectures, that such an algorithm exists for the integral group ring of any group with a decidable word problem and a bound on the sizes of finite subgroups. 


\end{abstract}

\maketitle



\section{Introduction}

The $l^2$-cohomology was introduced by Atiyah \cite{Atiyah1976}. Since then it has been studied  in numerous contexts, e.g. \cite{cheeger_gromov:l2_cohomology_and_group_cohomology}, \cite{gaboriau_invariants_l2_de_relations_dequivalence}, \cite{lott-lueck-l2-topological-invariants-of-3-manifolds}.  For a very readable introduction see \cite{Eckmann_intro}, and for more up-to-date information see \cite{Lueck:Big_book}. 

If $X$ is a CW-complex, the $l^2$-cohomology of $X$ is defined in the same way as the CW-cohomology, but using  $l^2$-cochains and $l^2$-coboundaries. If $X$ is a finite complex and $\pi_1(X) = G$, the deck transformation action extends to the action of $G$ on the $l^2$-cohomology groups of the universal cover of $X$. In this way $l^2$-cohomology groups become modules over the von Neumann algebra of $G$. The von Neumann dimensions of the $l^2$-cohomology groups are called \textit{$l^2$-Betti numbers}. 

If $X$ is a finite CW-complex, the phrase \textit{$l^2$-Betti numbers of $X$} is  a shorthand for \textit{$l^2$-Betti numbers  of the universal cover of $X$ with respect to the action of $\pi_1(X)$}. 
\smallskip

One popular research subject is the determination of possible values of the $l^2$-Betti numbers. It is useful to fix a group $G$, and consider the set $\cal C(G)$ of all possible $l^2$-Betti numbers of finite CW-complexes with fundamental group $G$. The determination of $\cal C(G)$ is called \textit{the Atiyah problem for $G$}, since Atiyah asked a related question in his foundational paper \cite{Atiyah1976}. 

When $G$ is torsion-free, then conjecturally $\cal C(G) = \N :=\{0,1,\ldots\}$. This statement is known as  \textit{the Atiyah conjecture for torsion-free groups}. Similarly, let a BFS-group be a group for which there is a bound on the size of finite subgroups. For a BFS-group $G$  one conjectures the existence of $k=k(G)\in \N$ such that $\cal C(G) = \{\frac{1}{k}, \frac{2}{k},\ldots\}$.

\smallskip
Linnell \cite{Linnel:Atiyah_conjecture_for_free_groups} showed that the Atiyah conjecture for torsion-free groups implies  that $\Z[G]$ embeds into a skew-field. Similarly, when $G$ is a BFS-group, the Atiyah conjecture implies that $\Z[G]$ embeds into the ring of $k(G)\times k(G)$ matrices over a skew-field. 

A lot of appeal of the Atiyah conjecture comes from this structural result. In fact, Linnell showed that the existence of a skew-field of so called \textit{affiliated operators} which contains $\Z[G]$  is equivalent to the Atiyah conjecture.

For some non-BFS groups it has been shown recently (\cite{arxiv:austin-2009}, \cite{arxiv:grabowski-2010}, \cite{arxiv:pichot_schick_zuk-2010}, \cite{arxiv:lehner_wagner-2010}, \cite{arxiv:grabowski-2010-2}) that $\cal C(G)$ contains irrational numbers. All examples so far contain $\ZZ_p\wre\ZZ$ as a subgroup for some $p \ge 2$. 

Our main result shows that groups which contain $(\ZZ_p\wre\ZZ)^3$  have a property very much antipodal to the existence of a skew-field of affiliated operators: the computational problem of determining if a matrix over $\Z[G]$ is invertible as an affiliated operator is undecidable (for the precise meaning of \textit{undecidable} see for example \cite{sipser_introduction_to_the_theory_of_computation}). 

\smallskip

We first state the main result in terms of $l^2$-cohomology. Consider the following computational problem. Its input consists of a number $n$ and the description of the gluing maps used to build a CW-complex $X$. The algorithm should decide if the $n$-th $l^2$-Betti number of $X$ is $0$. We call this computational problem \textit{Trivial-$l^2$-Betti-numbers} for $G$.
\begin{thm}\label{thm-main}
 Let $G$ be a finitely-presented  group which contains $\lamplighter$. Then the problem  Trivial-$l^2$-Betti-numbers for $G^3$ is undecidable.
\end{thm}
\begin{remarks}
 \begin{enumerate}
  \item Note that the problem of determining if the ordinary cohomology of a finite CW-complex vanishes is decidable, since it boils down to checking if the kernel of a finite-dimensional matrix with integer coefficients is trivial.
\item The above theorem remains valid for $\ZZ_p\wre\ZZ$ in place of $\ZZ_2\wre\ZZ$, but to keep notation simpler we deal only with $\lamplighter$. 
\end{enumerate}
\end{remarks}

In the actual proofs we never deal with CW complexes, only with the combinatorial Laplacians (see \cite{Eckmann_intro}). Combinatorial Laplacians are matrices over the group ring $\Z[G]$ which can be read off easily from the gluing maps. The $l^2$-cohomology of a CW-complex is isomorphic with the kernel of the combinatorial Laplacian of the suitable degree, seen as an operator on $l^2(G)^m$.

Conversely, given a matrix $M$ over $\Z[G]$, we can easily build a CW-complex $X$ with fundamental group $G$,  such that in some degree the $l^2$-cohomology of $X$ will be isomorphic to $\ker M$ (see \cite{Eckmann_intro} again).  This shows that Trivial-$l^2$-Betti-numbers  is equivalent to the following computational problem.
\begin{problem}{Kernel-over-$\Z[G]$}
\textbf{Input:} matrix $M\in M_k(\Z[G])$ for some $k$

\textbf{Problem:} Is $\ker M =\{0\}$? (here $M$ is considered as an operator $l^2(G)^k \to l^2(G)^k$)
\end{problem}

This problem no longer requires $G$ to be finitely presented. In view of the two preceding paragraphs, Theorem \ref{thm-main} is a corollary of the following.
\begin{thm}\label{thm-main-honestly}
Kernel-over-$\Z[G]$  is undecidable for $G = (\lamplighter)^3$.

\end{thm}

\begin{remarks}
 \begin{enumerate}
\item The general strategy is to adapt  techniques from \cite{arxiv:grabowski-2010} which allow for ``realizing Turing machines as matrices over $\Z[G]$.''

 \item The currently most interesting case of Kernel-over-$\Z[G]$ problem is when $G= \lamplighter$. It seems the methods of this article could perhaps show that for  $G= (\lamplighter)^2$ the problem is undecidable, but for $\lamplighter$  some new idea would be needed. In the proof, the exponent bigger than $1$ corresponds closely to the fact that the halting problem for Turing machines which are \textit{read-only and operate on more than one tape} is undecidable. The halting problem for read-only Turing machines which operate on one tape is decidable (\cite[Chapter 4]{sipser_introduction_to_the_theory_of_computation}) and so our approach breaks down completely for $\lamplighter$. 

\item In \cite{arxiv:grabowski-2010-2} it is shown that $\cal C(\lamplighter)$ contains transcendental numbers. However, the method is more complicated than for $\cal C((\lamplighter)^k)$ when $k\ge 2$, for the same reason as in the previous remark. See \cite{arxiv:grabowski-2010} for an example of a transcendental number in $\cal C((\lamplighter)^3)$ which directly uses a read-only Turing machine on $3$ tapes.

\item It would be interesting to know if algorithms for Trivial-$l^2$-Betti-numbers  can be constructed using some more geometric methods (i.e. without directly passing to combinatorial Laplacians), at least for some class of ``nice'' CW-complexes or manifolds.
\item
It could be interesting to consider other computational problems for the group ring $\zring{G}$. Note that $T\in \zring G $ is not an $l^2$-zero-divisor if and only if it is an invertible in the ring of affiliated operators (\cite{Lueck:Big_book}, Lemma 8.8, Chapter 8). One could similarly consider the computational problems of being invertible in the group ring $\zring G$ or in the von Neumann algebra $LG$. This last problem corresponds to the question of whether the property of having a spectral gap around $0$ is decidable. It would be particularly interesting to give sufficient conditions for the spectral gap decidability, similar to Proposition \ref{prop-main} below. 

A problem related to the decidability of the spectral gap is computing the operator norm of an element in $\zring{G}$. Given a  finitely generated group $G$ we could ask for an algorithm which takes as an input a rational number $q$ and an element $T\in \zring{G}$ and correctly answers whether the norm of $T$ (as an operator on $l^2(G)$) is equal, less than, or greater than $q$. 

Certain related computational problems are considered in \cite{fritz-netzer-thome-can-you-compute-the-operator-norm}.
\end{enumerate}
\end{remarks}

It turns out that the Atiyah conjecture for BFS-groups implies fairly easily that Kernel-over-$\Z[G]$ is decidable.

\begin{proposition}\label{prop-main}
 Let  $G$ be  a finitely-presented sofic BFS-group with decidable word problem for which the Atiyah conjecture holds.  Then there is an algorithm which solves  Kernel-over-$\Z[G]$. 
\end{proposition}

\begin{remarks}
\begin{enumerate}\item  The property of sofic groups we use is the existence of a  bound on the spectral density of $T\in \Z[G]$,  computable in terms of the support and coefficients of $T$. Discussion of sofic groups and derivation of such a bound is deferred to the Appendix.  At present there are no groups which are proven not to be sofic.

It would be interesting to derive a computable bound on the spectral densities without using the soficity assumption.  

\item  If $G$ is as above except for having a decidable word problem, the proof actually shows that Kernel-over-$\Z[G]$ is decidable by a Turing machine with an oracle for the word problem of $G$ (see \cite{sipser_introduction_to_the_theory_of_computation} for Turing machines with oracles).

\end{enumerate}
\end{remarks}

Consider the following computational problem.
\begin{problem}{Zero-divisors-in-$\Z[G]$}
\textbf{Input:} an element $T$ of $\Z[G]$

\textbf{Problem:} Is $T$ a zero-divisor in $\Z[G]$?
\end{problem}

For amenable groups, it is well known that a matrix $M\in M_k(\Z[G])$ is a zero-divisor in $M_k(\Z[G])$ if and only if $\ker M \neq \{0\}$ (see e.g. \cite{Elek_the_strong_approximation_conjecture_holds_for_amenable_groups} or \cite{Pape_a_short_proof_of_the_approximation_conjecture_for_amenable_groups} for a short proof). This, together with Theorem \ref{thm-main-honestly}, shows that the zero-divisor problem for matrices over $\Z[(\lamplighter)^3]$ is undecidable. We can get rid of matrices for the price of increasing the exponent by one.

\begin{cory}\label{cory-intro-zero-divisors-in-lamplighter-4}
 Zero-divisors-in-$\Z[(\lamplighter)^4]$ is undecidable.
\end{cory}

The proof of this corollary is presented in Section \ref{section-lamplighter4}. The essential part of it is embedding arbitrarily large matrices into the group ring $\Q[\lamplighter]$. 

Proposition \ref{prop-main} is proven in Section \ref{section-sofic-decidable}. In the same section we present some easy consequences of Proposition \ref{prop-main} concerning Zero-divisors-in-$\Z[G]$  for amenable groups.  We discuss  also relations between Zero-divisors-in-$\Z[G]$ and the  Kaplansky zero-divisor conjecture for torsion-free groups.

Section \ref{section_prelim_on_turing_machines} discusses the properties of Turing machines which we need in the proof of Theorem \ref{thm-main-honestly} in Section \ref{section_embed}.

\section{Notation and conventions}\label{section-notation} 

The infinite cyclic group is $\ckl{}$, the cyclic group of order $n$ is $\ckl{n}$, the rings of integers, rationals, reals, and complex numbers are respectively $\Z$, $\Q$, $\R$, $\C$.

Given a ring $R$, the ring of $k\times k$-matrices over $R$ is denoted by $M_k(R)$. A {\it trace} $\tau$ on $R$ is a function $\tau\colon R \to \C$ such that $\tau(ab)=\tau(ba)$.  If $R$ is a ${}^*$-ring of operators on a Hilbert space then we also require $\tau(T^*T)$ to be  a non-negative real number, for all $T\in R$. The standard trace (i.e. sum of diagonal elements) on $M_k(\C)$ is denoted by $\tr$.

Given a group $G$, the complex group ring and the Hilbert space of $l^2$-summable functions are denoted by $\cring G$  and $l^2 (G)$. The standard basis elements of $l^2 (G)$ are denoted by $\zeta_g$, $g\in G$. We have an action of $\C [G]$ on $l^2(G)$ by bounded linear operators: it is induced by the action of $G$ on $l^2G$ defined by $g\cdot \zeta_h = \zeta_{gh}$. The canonical trace on $\cring G$ is defined by $\tr_{vN} (A) := \langle A\zeta_e, \zeta_e\rangle$, where $e\in G$ is the neutral element. We call it the \textit{von Neumann trace}, although usually this name is used only after taking the closure of $\C[G]$ with respect to the weak topology.

If $R$ is a ${}^*$-ring of operators on a Hilbert space, then a trace $\tau$ on $R$ is \textit{normal} if it extends to a continuous trace on the weak closure  of $R$. It is \textit{faithful} if, for every $T$, $\tau(T^*T)=0$ implies $T=0$. The traces defined above are faithful and normal.

If $\tau$ is a faithful normal trace on $R$, and $T^*=T\in R$ then the {\it spectral measure $\mu_T$ of $T$} is the usual projection-valued spectral measure of $T$ composed with $\tau$ (it makes sense to evaluate $\tau$ on spectral projections of $T$, since the latter are in the weak closure of $R$). The {\it von Neumann dimension of the kernel of $T$} is defined as $\dimvn \ker (T):=  \mu_T(\{0\}) $. For a general $T$ we define $\dimvn \ker (T) := \dimvn \ker (T^*T)$. 

The symbol $\tau$ also denotes the induced trace on $M_k(R)$, i.e. if $T\in M_k(R)$ is a matrix with entries $T_{ij}$ then $\tau(T) := \sum \tau(T_{ii})$. The spectral measure $\mu_T$ is computed with respect to this trace.

For more information on the spectral measures see \cite{Reed_Simon_Methods_of_moder_mathematical_warfare_I}. The book \cite{Lueck:Big_book} deals specifically with von Neumann dimensions in the context of group actions (Chapters 1 and 2). The introductory article \cite{Eckmann_intro} also covers von Neumann dimensions in as much as we need.


\section{Decidable $l^2$-zero-divisor problem}\label{section-sofic-decidable}
Consider the following computational problem.
\begin{problem}{$l^2$-zero-divisors-in-$\Z[G]$}
\textbf{Input:} $M\in \Z[G]$

\textbf{Problem:} Is $\ker M =\{0\}$?
\end{problem}

Although this problem is potentially easier than Kernel-over-$\Z[G]$, the proof of decidability is the same. Because of simpler notation, we only show that  $l^2$-zero-divisors-in-$\Z[G]$ is decidable, and leave the general case of Proposition \ref{prop-main} for the reader.

\begin{prop}\label{prop-main-honestly}
  Let $G$ be a finitely-generated sofic BFS-group with a decidable word problem and for which the Atiyah conjecture holds. Then $l^2$-zero-divisors-in-$\Z[G]$  is decidable.
\end{prop}

The property of sofic groups which we use is as follows. We defer the proof to the appendix.

\begin{lemma}\label{lemma-sofic-property}
 Let $G$ be a sofic group. There is a computable function $h=h_G\colon \N_+ \to \N_+$ such that if $T\in \Z[G]$ is positive and self-adjoint, its support consists of at most $n$ elements, and coefficients of $T$ are bounded by $n$, then
$$
|\dimvn\ker T - \trvn (1-\frac{T}{\|T\|_1})^{h(n)}| <\frac{1}{n},
$$
where $\|T\|_1$ is the sum of absolute values of coefficients of $T$.
\end{lemma}

\begin{remarks}
 \begin{enumerate}
  \item The role of the denominator is to make sure that the operator norm of $\frac{T}{\|T\|_1}$ is at most $1$.  
  \item The convergence of the sequence $ \tr (1-\frac{T}{\|T\|_1})^n$ to $\dimvn\ker T $ is true for any group - it is a consequence of the spectral theorem.
  \item In the case of sofic groups we get a  function $h$, which \textit{does not depend on the group $G$} (see Proposition \ref{prop-concrete-bound}). It would be interesting to show that for every (possibly non-sofic) group $G$ there exists some computable function $h$ as above.
 \end{enumerate}
\end{remarks}

\begin{proof}[of Proposition \ref{prop-main-honestly}]
Let $k = k(G)\in \N$ be the number guaranteed by the Atiyah conjecture, i.e.. for any $T\in \Z [G]$ we have $\dimvn\ker T \in \{0,\frac{1}{k}, \frac{2}{k},\ldots\}$.

 Let $T\in \Z [G]$. Note that $T$ is an $l^2$-zero divisor if and only if the spectral measure of $T$ has an atom at $\{0\}$. If the measure of an atom is positive then, by the Atiyah conjecture, it is at least $\frac1{k}$.

 The algorithm should compute $\trvn (1-\frac{T}{\|T\|_1})^{h(3k)}$ (this is possible because the word problem is decidable). Let the outcome of this computation be called $c$. By the definition of $h$ we have  $|c- \dimvn \ker T| < \frac1{3k}$. Since we know that $\dimvn\ker T\neq 0$ is equivalent to $\dimvn\ker T \ge \frac1{k}$, we get that $T$ is a zero-divisor if and only if $c>\frac1{3k}$.
\end{proof}

Recall that the Kaplansky's zero-divisor conjecture states that $0$ is the only zero-divisor in $\Z[G]$ when $G$ is torsion-free. In the case of a torsion-free group with a decidable word problem, obtaining an algorithm for Zero-divisors-in-$\Z[G]$ can be seen as a ``weak version'' of the zero-divisor conjecture, since if the latter holds then  $T\in \Q[G]$ is a zero-divisor if and only if $T=0$, and using decidability of the word problem we can decide if $T =0$.  

However, our results give very little new information about the zero-divisor conjecture. We only mention the following corollary to Proposition \ref{prop-main}. Its main interest comes from the fact that it is often difficult to establish the zero-divisor conjecture for extensions.

\begin{cory}\label{intro_prop_zero_div_conjecture}
 Let $G$ be a finitely-generated  amenable torsion-free group with decidable word problem for which the zero-divisor conjecture holds. Then Zero-divisors-in-$\Z[G_1]$ is decidable for any $G_1$ which contains $G$ as a subgroup of finite index.
\end{cory}
\smallskip\textit{Sketch of proof.}
The zero-divisor conjecture is known to imply the Atiyah conjecture in the case of amenable groups. Indeed, as mentioned in the introduction, the zero-divisor conjecture implies the $l^2$-zero-divisor conjecture, which is equivalent to the statement that all non-zero elements of $\Q[G]$ are invertible in the ring of affiliated operators. As such we can use a ring-theoretic localization (\cite[Example 8.16]{Lueck:Big_book}) to obtain an embedding of $\Q[G]$ into a skew-field of affiliated operators, which by results of Linnell (\cite{Linnel:Atiyah_conjecture_for_free_groups}) is equivalent to the Atiyah conjecture.

After passing to a subgroup of $G$, we can assume that $G$ is normal in $G_1$. Let $m=[G_1:G]$. Using coset representatives, it is not difficult to show that  we can embed $\Q[G_1]$ into $M_m(\Q[G])$ in a way which scales the von Neumann trace by $m$. Since the Atiyah conjecture holds for $G$, we obtain that $\cal C(G_1)\subset \{0,\frac{1}{m},\frac{2}{m}\ldots\}$, i.e.  (a weak form of) the Atiyah conjecture for $G_1$. 

The decidability of the word problem for $G_1$ follows from explicitly writing down the embedding of $G_1$ into $M_m(\C[G])$. Finally, amenable groups are sofic (\cite{Pestov_Hyperlinear_and_sofic_groups_a_brief_guide}), and so we can apply Proposition \ref{prop-main-honestly}. {\hfill$\Box$}
\medskip

The scope of the above corollary is rather limited: if we additionally assume that $G$ is \textit{elementary} amenable  then $G_1$ also is, and in this case if $G_1$ is torsion-free then the zero-divisor conjecture holds for $G_1$ (e.g. by \cite{Linnel:Atiyah_conjecture_for_free_groups}).

We finish by pointing out another corollary of Proposition \ref{prop-main}. It is possible that it could be also established by analysing the proof of the Atiyah conjecture for elementary amenable groups in \cite{Linnel:Atiyah_conjecture_for_free_groups}. By Corollary \ref{cory-intro-zero-divisors-in-lamplighter-4}, the statement is false without the assumption that $G$ is a BFS-group.

\begin{cory}\label{intro_cory_elementary_amenable_decidable}  Let $G$ be an elementary amenable BFS-group. Then the decidability of the word problem implies the decidability of the zero-divisor problem.
\end{cory}
\begin{proof} By \cite{Linnel:Atiyah_conjecture_for_free_groups}, the Atiyah conjecture holds for $G$.  The corollary follows because  for an amenable group we have that $M\in M_k(\C[G])$ is a zero-divisor if and only if $\ker M \neq \{0\}$.
\end{proof}

\section{Preliminaries on Turing machines}\label{section_prelim_on_turing_machines}

For information on Turing machines see \cite{sipser_introduction_to_the_theory_of_computation}. Given a Turing machine $M$ we denote by $A(M)$ and $S(M)$ the alphabet and the set of states of $M$. The rules by which the Turing machine operates are referred to as the \textit{transition table}. We assume that there are three special states \textsc{initial}, \textsc{reject} and \textsc{accept} in $S(M)$ and that the transition table is such that the state \textsc{initial} cannot be entered from any other state, and it is left in the first step of operation. 

For the standard Turing machines (i.e. where there is only one tape and where the tape head can both write and read the symbols on the tape), we assume that the tape can contain a special symbol \textsc{empty} which is not an element of  $A(M)$, corresponding to an empty place on the tape. The transition table has to specify the behaviour of the machine upon reading the \textsc{empty} symbol.

We will also consider read-only Turing machines. For these, instead of the symbol \textsc{empty}, we demand the existence of the symbol \textsc{delimiter} which also is not an element of $A(M)$. We assume that the transition table is such that whenever a tape head moves left and afterwards reads the symbol \textsc{delimiter}, its next move cannot be to the left; similarly for ``right'' in place of ``left''.

Finally, we will consider read-only Turing machines with multiple tape heads and with multiple tapes. For the former, the instructions in the transition table cannot be conditioned on whether two tape-heads are in the same position. 

{\it Configuration} of a  Turing machine $M$ is a triple consisting of a tape (or multiple tapes) with symbols written on it, a position of the tape head (or tape heads), and a state of $M$. {\it Initial configuration}  is a configuration whose state is the \textsc{initial} state, and (i) in the case of a read-write Turing machine the tape consists of infinitely many \textsc{empty} symbols, followed by a word in the alphabet $A(M)$, followed by infinitely many \textsc{empty} symbols; (ii) in the case of a read-only Turing machine each tape is finite and has the form \textsc{delimiter}, followed by the word in $A(M)$, followed by \textsc{delimiter}. In both cases the tape head (resp. heads) is assumed to be on the first symbol belonging to $A(M)$ on the tape (resp. each tape).

$M$ is {\it foolproof} if it halts (i.e. enters the \textsc{accept} or the \textsc{reject} state) no matter what configuration it is put into before it starts operating. Note that this is stronger than saying that $M$ always halts, as we require that $M$ halts also on configurations which are not initial  (and which in principle might not appear in any computation which starts from an initial configuration).

We start with  a variant of a folklore proposition about read-only Turing machines (\cite[Exercise 5.26]{sipser_introduction_to_the_theory_of_computation}).
 
\begin{proposition}\label{prop_machine_2_read_only} 
There is an algorithm which given a Turing machine $M$ produces a read-only Turing machine $\cal R(M)$ with two tape heads such that there is a word which $M$ accepts if and only if there is a word which $\cal R(M)$ accepts.
\end{proposition}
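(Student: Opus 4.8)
**The plan is to simulate $M$ on a read-only machine that uses its tape as a read-only record of the input while reconstructing, step by step, the contents of $M$'s work tape purely from the transition history.** The key observation is the classical one behind the cited exercise: a read-only head together with a second head (or equivalently, enough bookkeeping) can replay the entire computation of $M$ from scratch as many times as needed, so that "the current symbol under $M$'s head" can always be recomputed rather than stored. Concretely, $\cal R(M)$ will carry in its finite state the current state of $M$ and the current head position of $M$ encoded implicitly; but since positions are unbounded, we instead maintain the invariant that at the start of a simulated step, one of $\cal R(M)$'s two heads sits over the cell of the (conceptual) work tape that $M$'s head occupies, while the original input on the real tape is used only to initialise that cell and is otherwise untouched.

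First I would set up the encoding. Because $M$ is a standard read-write machine, after $t$ steps its work tape differs from the initial tape in only finitely many cells, all within distance $t$ of the origin. The idea is that $\cal R(M)$ never literally stores the work tape; instead, to learn the symbol currently under $M$'s head after $t$ steps, it re-runs the simulation from time $0$, using the two heads to count and to locate cells, and recomputes that symbol from the sequence of writes. This is the standard "recompute instead of remember" trick. I would organise this as an outer loop over simulated time $t$: at iteration $t$, $\cal R(M)$ determines the symbol $M$ reads at step $t$ by a full replay up to step $t$, consults $M$'s transition table (hard-wired into $\cal R(M)$'s finite control), updates the finite-state data (new state of $M$, direction of head move), and increments $t$. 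If $M$ would enter \textsc{accept} or \textsc{reject}, $\cal R(M)$ does the same. The two tape heads are exactly what is needed to implement the replay: one head marks "the cell whose contents I am currently trying to determine", the other sweeps to count steps / compare positions, in the usual two-counter-like fashion available on a read-only tape bounded by \textsc{delimiter}s.

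Next I would verify the acceptance equivalence. By construction, for any fixed input word $w$, $\cal R(M)$ started on the read-only tape \textsc{delimiter}, $w$, \textsc{delimiter} faithfully reproduces the step-by-step behaviour of $M$ started on $w$: each simulated step yields the correct read symbol (by correctness of the replay subroutine, which I would state as a small invariant and check by induction on $t$), hence the correct transition, hence $\cal R(M)$ halts and accepts exactly when $M$ does. Therefore there exists a word accepted by $M$ if and only if there exists a word accepted by $\cal R(M)$ — in fact the very same words, which is stronger than what is asked. I would also note that the algorithm producing $\cal R(M)$ from $M$ is completely effective: it is a fixed construction that reads $A(M)$, $S(M)$ and the transition table and writes down $A(\cal R(M))$, $S(\cal R(M))$ and its transition table, respecting the conventions on \textsc{delimiter} (no move left immediately after reading \textsc{delimiter} on a left-move, and symmetrically), which the replay routine can be arranged to satisfy since it always bounces off the delimiters rather than running past them.

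The main obstacle is purely a matter of careful bookkeeping rather than of idea: arranging the replay subroutine so that exactly two heads suffice and so that all head motions respect the read-only delimiter conventions, while keeping the finite control finite (in particular, never needing to store an unbounded position or time counter in the state). The delicate point is that naive recomputation seems to need to "remember how far along we were", which would be unbounded; the resolution is that $\cal R(M)$ is allowed to use only the input word itself as scratch space for positional information, using one head as a movable marker and the other as a scanner, so that all counters are represented by head positions on the (finite) tape rather than in the state. I expect this head-management argument to be the part that needs the most care, and it is essentially the content of the cited folklore exercise; the rest is routine simulation.
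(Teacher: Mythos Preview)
Your approach has a genuine gap: a read-only machine with two heads on an input of length $n$ has at most $O(n^2\cdot|S(\cal R(M))|)$ distinct configurations, so after that many steps it either halts or cycles forever. In particular, any language accepted by such a machine lies in deterministic logspace. Hence your claim that $\cal R(M)$ accepts \emph{the very same words} as $M$ cannot be correct in general: if $M$ accepts a language outside $L$ (or merely runs for super-polynomial time on some accepted input), no two-head read-only machine can reproduce its behaviour on the same inputs. The ``recompute instead of remember'' replay you describe needs counters that go up to the running time of $M$, and you propose to store those counters as head positions; but head positions are bounded by the input length, not by $M$'s running time.

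The paper avoids this obstruction by changing the input rather than simulating on the same input: $\cal R(M)$ is fed a purported \emph{accepting computation history} of $M$ (a sequence of configurations separated by a \textsc{next configuration} marker), and its job is only to verify that consecutive configurations are related by one step of $M$ and that the \textsc{accept} state appears. This check is local and needs just two heads (one on each configuration being compared). The existential quantifier in the statement --- ``there is a word which $\cal R(M)$ accepts'' --- now ranges over histories of arbitrary length, which is what absorbs the unbounded running time of $M$. Your simulation idea would need that slack somewhere, and on the same input there is nowhere to put it.
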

\smallskip\textit{Sketch of proof.}
Let us describe  $\cal R(M)$ explicitly. $A(\cal R(M))$ consists of the symbol \textsc{next configuration}, the elements of $S(M)$ and the elements of $A'(M):= (A(M)\cup{\textsc{empty}})\times\{0,1\}$. 

The machine $\cal R(M)$ should first check whether the input starts with \textsc{next configuration}, followed by a symbol from $S(M)$, followed by a word in $A'(M)$, followed by \textsc{next configuration}, followed by... and finishing with \textsc{next configuration}. Afterwards it should check if each word in $A'(M)$ has precisely one symbol which maps to $1$ (which should be interpreted as the position of the tape head of $M$). For all this we need just one tape head.

In the second stage  $\cal R(M)$ checks whether the consecutive configurations on the tape indeed correspond to the consecutive configurations of the execution of $M$. This can be done with two tape heads.

In the final stage $\cal R(M)$ should check whether the \textsc{accept} symbol appears in the input. {\hfill$\Box$}

\begin{cory} \label{cory_machine_2_read_only_multitape} There is a an algorithm which given a Turing machine $M$ produces a foolproof read-only Turing machine $\cal F(M)$ with three tapes, such that there is a word which $M$ accepts if and only if there is a word which $\cal F(M)$ rejects.
\end{cory}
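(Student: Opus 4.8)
\textit{Proof proposal.} The plan is to build $\cal F(M)$ by running the machine $\cal R=\cal R(M)$ of Proposition~\ref{prop_machine_2_read_only} under a built-in timeout and then swapping its accepting and rejecting outcomes. Recall that $\cal R$ is read-only and that some word is accepted by $\cal R$ iff some word is accepted by $M$; let $s$ be its number of states and $h$ its number of heads (so $h=2$, on one tape, in the construction of Proposition~\ref{prop_machine_2_read_only}). The machine $\cal F(M)$ will have three tapes, each carrying the input word: one of them runs the simulation of $\cal R$, carrying $\cal R$'s $h$ heads, and the remaining two serve only to hold a step counter, encoded through the positions of a constant number of additional heads spread over those two tapes. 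When the simulated $\cal R$ enters \textsc{accept}, $\cal F(M)$ enters \textsc{reject}; when it enters \textsc{reject}, $\cal F(M)$ enters \textsc{accept}; and if the counter overflows first, $\cal F(M)$ enters \textsc{accept}.

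The counter is the heart of the matter, because it is what forces halting. One uses the trivial but crucial remark that a read-only machine never alters its tapes, so a configuration of $\cal R$ on a fixed input consists of nothing but a state and one position per head; on tapes of length $L$ -- and $L\ge 2$ always, because of the two \textsc{delimiter}s -- there are at most $s\cdot L^{h}$ of them. Hence, starting from \emph{any} configuration at all, if $\cal R$ runs for more than $s\cdot L^{h}$ steps without entering \textsc{accept} or \textsc{reject} it has repeated a configuration and loops forever. Now a head on a length-$L$ tape ranges over $\Theta(L)$ positions and can act as one digit of a base-$L$ counter, so $k$ heads distributed over the two auxiliary tapes realise a counter taking the values $0,\dots,L^{k}-1$; choosing a constant $k$, computable from $M$ (one may take $k=h+\lceil\log_{2}s\rceil+1$), so that $L^{k}>s\cdot L^{h}$ for every $L\ge 2$ makes the counter range dominate the configuration count \emph{uniformly} in the input length. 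Arranging the digit encoding so that the counter reads $0$ in the initial configuration, $\cal F(M)$ advances it by one -- by the usual increment-with-carry that reports an ``overflow'' instead of wrapping around -- before each simulated step of $\cal R$.

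Everything else is routine book-keeping inside the formalism of multi-tape read-only machines -- reading off the simulation tape the state and head data of $\cal R$, executing one transition of $\cal R$, recognising $\cal R$'s halting states, performing one counter increment -- so I will not spell it out. The biconditional is then immediate: on the initial configuration for a word $w$, $\cal F(M)$ faithfully runs $\cal R$ on $w$; if $\cal R$ accepts $w$ it does so within $s\cdot L^{h}<L^{k}$ steps, hence before overflow, so $\cal F(M)$ rejects $w$; if $\cal R$ rejects $w$ or loops on $w$, then $\cal F(M)$ never reaches \textsc{reject}, and it accepts at the latest when the counter overflows. Thus $\cal F(M)$ rejects $w$ iff $\cal R$ accepts $w$, and taking the disjunction over all words $w$ gives the statement via Proposition~\ref{prop_machine_2_read_only}.

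The point that needs genuine care -- and it is the actual substance of the corollary -- is that $\cal F(M)$ is \emph{foolproof}, i.e.\ it halts on every configuration, not merely the initial ones. From an arbitrary configuration (arbitrary admissible tape contents, arbitrary positions of the counter heads and of the simulated state) the counter still reaches overflow within $L^{k}$ increments, because each increment strictly advances it inside $\{0,\dots,L^{k}-1\}$ no matter what is written on the tapes, and each increment together with the bounded simulation work performed between increments costs a bounded number of steps; hence $\cal F(M)$ halts from every configuration. I expect this to be the main obstacle: one must verify both that the timeout dominates the configuration count uniformly in the input length -- so that no short input is decided incorrectly -- and that it is driven by a counter whose advance toward overflow cannot be obstructed by a hostile, non-initial starting configuration.
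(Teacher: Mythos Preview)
Your argument is correct, but it takes a genuinely different route from the paper's.

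The paper's construction is considerably simpler. First, it turns the one-tape two-head machine $\cal R(M)$ into a two-tape machine $\cal R''(M)$ with \emph{one head on each tape}, by having the machine begin with a check that both tapes carry the same word (so that head~$i$ on tape~$i$ faithfully simulates head~$i$ of $\cal R(M)$). Second, it swaps accept/reject. Third, for foolproofness it adjoins a third tape with a single head $H_3$: between any two transitions of $\cal R''(M)$ the machine interpolates a state that moves $H_3$ one cell to the right and accepts as soon as $H_3$ meets the right \textsc{delimiter}. Thus the third tape is just a unary timer whose length the user is free to choose; if $\cal R(M)$ accepts some $v$, then $(v,v,u)$ is rejected by $\cal F(M)$ for any sufficiently long $u$, and conversely any rejection of $\cal F(M)$ forces $\cal R(M)$ to accept the word on the first tape.

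Your approach instead keeps $\cal R(M)$'s two heads on the first tape and devotes the remaining two tapes to a positional base-$L$ counter built from several extra heads, with the overflow bound $L^k>sL^h$ chosen uniformly in $L$. This works, but (i) it is more elaborate than the unary timer, and (ii) it produces a machine with several heads on each tape. Point~(ii) is harmless for the corollary as stated, but note that the subsequent Proposition~\ref{prop_embed_turing_in_group_ring} encodes $\cal F(M)$ via the group $\Ga=\ckl{}^3\times GL(\cal S)$, i.e.\ exactly one shift per tape; the paper's one-head-per-tape $\cal F(M)$ feeds directly into that construction, whereas yours would not without further modification.

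One small imprecision: your sentence ``each increment \ldots\ costs a bounded number of steps'' is not literally true, since resetting a digit after a carry moves a head across the whole tape and hence takes $\Theta(L)$ steps. This does not damage your foolproofness argument---the total work is still finite because the counter value strictly increases---but the bound is in terms of $L$, not an absolute constant.
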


\smallskip\textit{Sketch of proof.}
Let $\cal R'(M)$ be the machine $\cal R(M)$  from the proposition with exchanged accepting and rejecting states. The first step is constructing a machine $\cal R''(M)$ with two tapes which simulates $\cal R'(M)$ by first checking whether both tapes have the same words written on them.

To assure the foolproofness we add the third tape, and we call its tape head $H_3$. For each pair of states $\si, \tau \in S(\cal R''(M))$ we add a new state $D(\si,\tau)$. If the transition table of $\cal R''(M)$ for a state $\si\in S(\cal R''(M))$ and symbols $s_1, s_2\in A(\cal R''(M))$ on the consecutive tapes requires changing the state to $\tau$, then in the transition table for $\cal F(M)$ we require changing the state to $D(\si, \tau)$. 

In each of the states $D(\si, \tau)$, the machine $ \cal F(M)$ moves $H_3$ to the right, and moves to the accepting state if $H_3$ reaches the end of the input, and to $\tau$ otherwise. In this way $\cal F(M)$ simulates $\cal R''(M)$ for the number of steps which is equal to the number of symbols between \textsc{delimiters} on the third tape, and terminates afterwards.{\hfill$\Box$}

\section{Embedding a Turing machine in a group ring}\label{section_embed}

A more detailed example of associating an element of a group ring to a Turing machine is in \cite[Section 5]{arxiv:grabowski-2010}. We quote some definitions from there.

Let $(X, \mu)$ be a probability measure space and $\rho:\Ga\curvearrowright X$ be a  measure preserving action of a countable discrete group $\Ga$ on a probability measure space $X$.  A {\it dynamical hardware} is the following data: $(X,\mu)$, the action $\rho$, and a division $X = \bigcup_{i=1}^n X_i$ of $X$ into disjoint measurable subsets. We denote such a dynamical hardware by $(X)$.

Suppose now that we are given $(X)$ and we choose three additional distinguished disjoint subsets of $X$, each of which is a union of certain $X_i$'s: the initial set $I$, the rejecting set $R$, and the accepting set $A$ (all or some of them might be empty). Furthermore, suppose that for every set 
$X_i$, we choose one element $\ga_i\in \Ga$ in such a way that the elements corresponding to the sets $X_i$ which are subsets of $R\cup A$ are equal to the neutral element  of $\Ga$.  A {\it dynamical software} for  $(X)$ is the following data: the distinguished sets $I, A$ and $R$ and the choice of elements $\ga_i$.

We define a map $T_X: X\to X$ by
$$
	T_X(x):= \rho(\ga_i)(x) \quad \text{ for } x\in X_i.
$$
The whole dynamical software will be denoted by $(T_X)$. A {\it Turing dynamical system} $(X, T_X)$ is a dynamical hardware $(X)$ together with a dynamical software $(T_X)$ for $(X)$. 

If $x\in X$ is such that for some $k$ we have $T_X^k(x)= T_X^{k+1}(x)$ then we define $T_X^\infty(x):= T_X^k(x)$. Otherwise we leave $T_X^\infty$ undefined.

The {\it fundamental set}  of $(X,T_X)$ is the subset $\cal F_1(T_X)$ of $I$ consisting of all those points $x$ such that $T^\infty(x) \in A$ and for no point $y\in X$ one has $T_X(y)=x$.  It is measurable (\cite{arxiv:grabowski-2010-2}), and therefore we define $\Om_1(T_X)$, the {\it fundamental value} of $(X,T_X)$, to be equal to $\mu(\cal F_1(T_X))$.

We  say that $(X,T_X)$ {\it stops on any configuration}, if for almost all $x$ we have $T_X^\infty(x)\in A\cup R$;  it {\it has disjoint accepting chains}, if for almost all $x\in \cal F_1$ we have that for all $y\in \cal F_1$  the inequality $T^\infty(x) \neq T^\infty(y)$ holds; finally it {\it does not restart}, if $\mu(T_X(X)\cap I)=0$. 

Suppose now that $(X,\mu)$ is a compact abelian group with the normalized Haar measure and the action of $\Ga$ is by continuous group automorphisms. Let $\wh X$ be the Pontryagin dual of $X$  and let us consider the rational group ring $\qring {\wh X}$ as a subring of $L^\infty (X)$ through the Pontryagin duality. Suppose that the characteristic functions $\chi_i$ of the sets $X_i$ are elements of $\qring{\wh X}$.

Let $G:= \wh X \rtimes_{\wh{\rho}} \Ga$ and define $T,S\in \qring G$ by $T:= \sum_{i=1}^n \ga_i\chi_i$ and
$$
S:=  (T+ \chi_X - \chi_I - \chi_A - \chi_R)^*(T+ \chi_X - \chi_I - \chi_A - \chi_R) + \chi_A.
$$

The following theorem is \cite[Theorem 4.3]{arxiv:grabowski-2010-2}.
\begin{theorem}\label{thm_external}
If $(X, T_X)$ stops on any configuration, has disjoint accepting chains, and does not restart, then $\dimvn\ker S$ is equal to $\mu(I) - \Om_1(T_X)$.
\end{theorem}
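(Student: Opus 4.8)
The plan is to first strip $S$ down. Since $\chi_X=1$ and $\chi_I,\chi_A,\chi_R$ and $1-\chi_I-\chi_A-\chi_R$ are mutually orthogonal projections, the operator $U:=T+\chi_X-\chi_I-\chi_A-\chi_R=T+\chi_{X\setminus(I\cup A\cup R)}$ satisfies $\langle Sv,v\rangle=\|Uv\|^2+\|\chi_A v\|^2$ for all $v$, so $\ker S=\ker U\cap\ker\chi_A$ and $\dimvn\ker S=\dimvn(\ker U\cap\ker\chi_A)$. I would compute this von Neumann dimension inside the crossed product $L^\infty(X)\rtimes_{\wh\rho}\Gamma$, into which $LG$ embeds trace-preservingly, acting on $L^2(X)\otimes l^2\Gamma\cong l^2G$ via the Pontryagin--Fourier identification $l^2\wh X\cong L^2(X)$. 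Under this identification $\chi_i$ becomes multiplication by the indicator of $X_i$, each $\gamma_i$ becomes a Koopman-type unitary, the von Neumann trace becomes $B\mapsto\langle B(1_X\otimes\delta_e),1_X\otimes\delta_e\rangle$, and --- this is the point of the construction of \cite{arxiv:grabowski-2010} --- the companion $T^*$ of $T$ is the composition operator $\phi\mapsto\phi\circ\wh T$ of a single-valued, measurable, at most $n$-to-one lift $\wh T\colon X\times\Gamma\to X\times\Gamma$ of $T_X$ (i.e.\ $\pi\circ\wh T=T_X\circ\pi$ for the projection $\pi$) which is measure-preserving in restriction to each sheet $X_i\times\Gamma$; equivalently $T$ is the associated transfer operator $(T\phi)(w)=\sum_{\wh T u=w}\phi(u)$.

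\textbf{Translating the hypotheses.} Next I would record what the three assumptions say about $\wh T$. ``Stops on any configuration'' means that for a.e.\ $v\in X\times\Gamma$ the forward orbit $v,\wh T v,\wh T^2v,\dots$ is eventually constant, the limit being a fixed point lying over $A\cup R$; ``does not restart'' means that a.e.\ point lying over $I$ has empty $\wh T$-preimage, i.e.\ is a \emph{source} (and, by the same argument applied iteratively, $T_X^{-k}(I)$ is null for every $k$); and ``disjoint accepting chains'' means that $T_X^\infty$ is a.e.\ injective on $\mathcal F_1(T_X)$. Consequently, up to a null set $X\times\Gamma$ is partitioned by the equivalence relation ``same $\wh T$-limit'' into countable ``trees'' rooted at the fixed points over $A\cup R$; the operators $U$ and $\chi_A$ are local for this countable measured equivalence relation, so $\dimvn$ can be read off orbit-by-orbit via a transversal integral, using the $\Gamma$-invariance of the trace and the sheetwise measure-preservation of $\wh T$.

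\textbf{Computing the kernel.} Writing out $U\phi=0$ along the $\wh T$-dynamics, one finds that at points over $X\setminus(I\cup A\cup R)$ and at the fixed points over $A\cup R$ the equation is the propagation rule $\phi(w)=-\sum_{\wh T u=w,\,u\neq w}\phi(u)$, whereas at points over $I$ it is vacuous. Hence a source over $X\setminus(I\cup A\cup R)$ is forced to carry the value $0$, a source over $I$ carries free data, and the root relation expresses $\phi$ at an accepting fixed point as a signed sum of the free data sitting at the sources over $I$ in its tree. The analysis then has two steps. First, the correspondence ``free $L^2$-data on the sources over $I$ $\mapsto$ its propagation'' should be a bounded, injective, $\Gamma$-equivariant map with closed range equal to $\ker U$, which gives $\dimvn\ker U=\mu(\{\text{sources over }I\})=\mu(I)$, the last equality by ``does not restart''. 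Second, imposing $\chi_A\phi=0$ forces the value at each accepting fixed point to vanish, which by the root relation is exactly one linear constraint per accepting tree; ``disjoint accepting chains'' says that each accepting tree contains at most one source over $I$, namely a lift of the unique $\mathcal F_1$-point sent to that root, so the constraint precisely demands that the free data vanish on $\mathcal F_1(T_X)\times\Gamma$. Therefore $\ker S$ is carried boundedly, equivariantly and with closed range by $L^2\big((I\setminus\mathcal F_1(T_X))\times\Gamma\big)$, whence $\dimvn\ker S=\mu(I)-\mu(\mathcal F_1(T_X))=\mu(I)-\Omega_1(T_X)$.

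\textbf{The main obstacle.} The hard part is making the first step above rigorous in the measured rather than discrete-graph setting: the boundedness and the closed range of the ``propagate the data'' map, which is equivalent to ruling out an extra atom of $U^*U$ at $0$ coming from the ``ill-founded'' part of the dynamics --- points over $\bigcap_k T_X^k(X)$, whose backward $\wh T$-orbit never terminates. On a single \emph{discrete} infinite tree the operator $1+(\text{child-sum})$ does have an $l^2$-kernel, but such a vector is supported on a countable, hence null, subset of $X\times\Gamma$ and so is $0$ there; one must turn this observation into a statement that the relevant spectral measure has no atom at $0$ on the ill-founded piece. A robust way to organise this and to finish is to avoid $\ker U$ abstractly and instead compute $\mu_S(\{0\})=\lim_n\tr_{vN}(1-cS)^n$ directly, splitting $X\times\Gamma$ according to the behaviour of the forward and backward $\wh T$-orbit of a point and matching each piece with the corresponding dynamical quantity; the bookkeeping then collapses to the measure-preservation of $T_X$ on each itinerary class together with the a.e.\ injectivity of $T_X^\infty$ on $\mathcal F_1(T_X)$.
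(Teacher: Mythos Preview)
The paper does not prove this statement at all: it is quoted verbatim as Theorem~4.3 of \cite{arxiv:grabowski-2010} and used as a black box. So there is no in-paper argument to compare your proposal against; any proof you supply is already ``different'' in the trivial sense that the present paper gives none.

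On the substance of your sketch: the overall strategy --- pass to the crossed-product picture on $L^2(X)\otimes l^2\Gamma$, identify $\ker S=\ker U\cap\ker\chi_A$, and read the kernel off along the orbit equivalence relation generated by the lift $\wh T$ --- is indeed the line taken in \cite{arxiv:grabowski-2010}. Your description of the local equations (free data at sources over $I$, forced zeros at sources over $X\setminus(I\cup A\cup R)$, and one linear constraint per accepting root) matches the dynamical bookkeeping there, and your use of ``does not restart'' and ``disjoint accepting chains'' is exactly what makes the count $\mu(I)-\Omega_1(T_X)$ come out.

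Where your write-up stops short is precisely where you flag it: you do not actually establish that the ``propagate the free data'' map is bounded with closed range, i.e.\ you do not rule out a spurious contribution to $\dimvn\ker U$ from the ill-founded part of the dynamics. This is not a side issue --- it is the genuine analytic content of the theorem, and in \cite{arxiv:grabowski-2010} it is handled by a careful computation (essentially the $\lim_n\tr_{vN}(1-cS)^n$ route you allude to at the end, together with an explicit control on orbit lengths coming from ``stops on any configuration''). As written, your proposal is a correct and well-organised outline of the argument in the cited source, but it is not yet a proof: the paragraph labelled ``The main obstacle'' names the gap without closing it. If you want a self-contained proof, you should either carry out the trace computation you sketch in the last sentence or import the relevant lemma from \cite{arxiv:grabowski-2010} explicitly.
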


The main result of this section is the following proposition.
\begin{proposition}\label{prop_embed_turing_in_group_ring}
There is an algorithm which given a foolproof read-only Turing machine $M$ on three tapes produces a finite group $H(M)$ and an element $T(M)$ of $\zring{(\lamplighter)^3 \times H(M)}$ in such a way that there exists a word  which $M$ rejects if and only if  $\dimvn\ker T(M) \neq 0$.
\end{proposition}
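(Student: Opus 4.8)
The plan is to encode $M$ as a Turing dynamical system in the sense of Section~\ref{section_embed} and then invoke \theoref{thm_external}. I would take $X := (\ckl 2^{\ckl{}})^3 \times X_0$, the Pontryagin dual of $(\ckl 2^{\oplus\ckl{}})^3 \times \wh{X_0}$, where $X_0$ is a finite abelian group of exponent $2$, and let $\Ga := \ckl{}^3$ act by continuous automorphisms, the $t$-th coordinate of $\ckl{}^3$ acting by the shift on the $t$-th copy of $\ckl 2^{\ckl{}}$ and trivially on $X_0$. Then $\wh X \rtimes_{\wh\rho} \Ga = (\ckl 2^{\oplus\ckl{}} \rtimes \ckl{})^3 \times \wh{X_0} = (\lamplighter)^3 \times H(M)$ with $H(M) := \wh{X_0}$, which is a finite abelian group of exponent $2$ and hence a subgroup of $\lamplighter$. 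The exponent-$2$ condition ensures that the characteristic functions of the cylinder sets used below lie in $\qring{\wh X}$, so that the element $S \in \qring{(\lamplighter)^3 \times H(M)}$ produced by the construction satisfies $\dimvn\ker S = \mu(I) - \Om_1(T_X)$ as soon as the three hypotheses of \theoref{thm_external} are verified; one then lets $T(M)$ be $S$ multiplied by the least common multiple of the denominators of its coefficients, so that $T(M) \in \zring{(\lamplighter)^3 \times H(M)}$ and $\dimvn\ker T(M) = \dimvn\ker S$.

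First I would effectively replace $M$ by a foolproof read-only three-tape machine $M'$ all of whose tape symbols lie in $\{0,1\} = \ckl 2$: each tape symbol of $M$, including the \textsc{delimiter}, is encoded by a fixed-length block of bits, with reserved self-avoiding patterns marking block boundaries and the \textsc{delimiter}, and one step of $M$ is simulated by a bounded subroutine of $M'$ that reads the three blocks currently under the heads and then walks each head over whole blocks, never passing a \textsc{delimiter} pattern. The machine $M'$ first scans its input to check it is a well-formed encoding of a word, halting in the accepting state if it is not, and only then starts simulating $M$. With this convention $M'$ rejects a binary word if and only if that word is a well-formed encoding of a word rejected by $M$, so $M'$ rejects some word if and only if $M$ does; and $M'$ is foolproof, because from any configuration it is at every moment either running the bounded validity scan, or running a bounded block subroutine, or simulating $M$ on some honestly decoded tape contents, and the last of these terminates since $M$ is foolproof. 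Arranging all this, in particular ensuring that the block-walking cannot desynchronise into a loop and that the validity scan is itself foolproof, is where most of the care is needed and is the main obstacle.

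Next I would write down the Turing dynamical system $(X, T_X)$ simulating $M'$. A point of $X$ encodes a configuration of $M'$, with the tapes shifted so that each head sits at coordinate $0$: the $t$-th coordinate $\ckl 2^{\ckl{}}$ holds the $t$-th tape, padded to a bi-infinite binary word outside the \textsc{delimiter} patterns (which the read-only machine never crosses), while the $X_0$-coordinate holds the state of $M'$ and the bounded auxiliary register of the block subroutines. The cells $X_i$ of the partition are the cylinder sets obtained by fixing the $X_0$-coordinate and finitely many tape coordinates near the heads, so their characteristic functions lie in $\Z[1/2][\wh X] \subseteq \qring{\wh X}$ as required, and the elements $\ga_i$ are read off the transition table of $M'$, taken to equal the identity on the cells contained in the accepting set $A$ (the configurations in state \textsc{accept}) and the rejecting set $R$ (the configurations in state \textsc{reject}), so that \textsc{accept} and \textsc{reject} are fixed points of $T_X$. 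The initial set $I$ is taken to be the set of configurations in state \textsc{initial}, a union of cells disjoint from $A$ and $R$.

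Finally I would verify the hypotheses of \theoref{thm_external}. That $(X, T_X)$ stops on any configuration is immediate from foolproofness of $M'$: from any $x$ the sequence $T_X^k(x)$ runs $M'$ from the configuration coded by $x$, which reaches \textsc{accept} or \textsc{reject}, after which $x$ is fixed. That it does not restart holds because, by our standing assumptions on Turing machines, the state \textsc{initial} is never entered from another state, so $T_X(X)$ is disjoint from $I$ and no point of $I$ has a $T_X$-preimage. That it has disjoint accepting chains holds because $M'$ is read-only: the tape contents are never altered, so $T_X^\infty$ restricted to $\cal F_1(T_X) \subseteq I$ returns the original tapes (together with the tape portions never visited by the heads) and is therefore injective, whence distinct accepting chains have distinct limits. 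Consequently $\dimvn\ker S = \mu(I) - \Om_1(T_X)$, and combining the last two properties this equals the measure of the set of points of $I$ whose $T_X$-orbit reaches $R$. A point of $I$ either is a well-formed encoding of some word for $M$, in which case it, and the whole cylinder containing it, reaches $R$ exactly when $M$ rejects that word, or it is malformed, in which case $M'$ accepts it; so this measure is a sum of positive cylinder measures, one for each word rejected by $M$, and it is nonzero if and only if $M$ rejects some word. Since every step above is effective in $M$, taking $T(M)$ as in the first paragraph completes the proof.
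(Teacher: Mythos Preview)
There is a genuine gap in your construction. With $\Ga = \ckl{}^3$ acting \emph{trivially} on $X_0$, every element $\ga_i\in\Ga$ fixes the $X_0$-coordinate, and hence the map $T_X(x)=\rho(\ga_i)(x)$ can shift the three tapes but can \emph{never change the state} stored in $X_0$. Your dynamical system therefore does not simulate $M'$ at all: a point whose $X_0$-coordinate is \textsc{initial} stays in that state forever, so the system neither ``stops on any configuration'' (no point ever reaches $A\cup R$) nor ``does not restart'' (indeed $T_X(I)\subseteq I$), and \theoref{thm_external} does not apply. The sentence ``the elements $\ga_i$ are read off the transition table of $M'$'' hides exactly this problem: the transition table specifies a new state, but there is no $\ga_i\in\ckl{}^3$ that effects a state change.

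The paper deals with this by enlarging the acting group to $\Ga=\ckl{}^3\times GL(\cal S)$, where $\cal S=\ckl{2}^n$ holds the state and $GL(\cal S)$ acts on it in the natural way; for each required transition $\si\to\tau$ one picks an automorphism $\ga(\si,\tau)\in GL(\cal S)$ sending $\si$ to $\tau$. The price is that $H(M)=\wh{\cal S}\rtimes GL(\cal S)$ is genuinely non-abelian, which is precisely why the passage to the fourth lamplighter factor in \theoref{thm_zero-divisor_lamplighter} requires the matrix-algebra embedding of \lemref{lemma_embed_matrix_rings} rather than a direct group embedding. Your attempt to keep $H(M)=\wh{X_0}$ abelian would, if it worked, bypass that step and even lower the exponent; but within this framework it cannot work, because the dynamical-software elements $\ga_i$ must lie in $\Ga$, and an abelian $\Ga$ acting trivially on the state factor simply cannot move it. (Encoding the state on a tape instead does not help either, since the only available tape moves are shifts, and a read-only machine cannot write the new state anywhere.)
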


\begin{proof}The first step is to algorithmically associate to $M$ a Turing dynamical system which fulfils the conditions of Theorem \ref{thm_external}, in such a way that $\Om_1(T_X)$ is smaller than $\mu(I)$ if and only if there is a word which $M$ rejects. 

Let $n$ be the smallest natural number such that $|S(M)|<2^n$. Consider the group $\cal S:=\ckl{2}^n$. Let us choose any $|S(M)|$ non-zero elements of it and label them with the elements of $S(M)$. For any pair of states $\si, \tau\in S(M)$ we fix an automorphism $\ga(\si, \tau)\in \aut(\cal S)$ which sends $\si$ to $\tau$. 

Let $m$ be the smallest natural number such that $|A(M)| + 1 \le 2^m$ and let $\cal A := \ckl{2}^m$. Let us choose any $|A(M)|$ non-zero elements of $\cal A$ and label them with the elements of $A(M)$. Let all the other elements of $\cal A$ be labelled with the \textsc{delimiter} symbol. 

Let us define a dynamical hardware. For the compact abelian group we take $X= (\prod_{\ckl{}} \cal A)^3\times \cal S$. For $\Ga$ take $\ckl{}^3\times \aut(\cal S)$. Each coordinate of $\ckl{}^3$ acts by shifting the appropriate coordinate of $(\prod_{\ckl{}} \cal A)^3$, and $\aut(\cal S)$ acts in the natural way on $\cal S$. 

We use the following notation for the cylinder subsets of $X=(\prod_{\ckl{}} \cal A)^3 \times \cal S$, motivated by thinking about the points of $X$ as the configurations of a Turing machine with the alphabet $\cal A$ and the set of states $\cal S$. The set $\{[(x_i,y_i,z_i),s]\in X\colon x_0=a, y_0=b, z_0=c,s=\si\}$ is denoted by 
$$
\left[ \begin{array}{l}\underline{a}\\\underline{b}\\\underline{c}\end{array}\right][\si], 
$$
the set $\{[(x_i,y_i,z_i),s]\in X\colon x_0=a,x_{-1}=a', y_0=b, y_{-1}=b', z_0=c, z_{-1}=c',s=\si\}$ is denoted by 
$$
\left[\begin{array}{l}{a'}\,\underline{a}\\{b'}\,\underline{b}\\{c'}\,\underline{c}\end{array}\right][\si],
$$
and so on. The set $\{[(x_i,y_i,z_i),s]\in X\colon s=\si\}$ is denoted by $[][\si]$. 

To finish the description of the dynamical hardware we need to specify a division of $X$. We start with the division
$$
X = \bigsqcup_{a,b,c\in \cal A, \si \in \cal S} \left[ \begin{array}{l}\underline{a}\\\underline{b}\\\underline{c}\end{array}\right][\si].
$$
and replace each  $\left[ \begin{array}{l}\underline{a}\\\underline{b}\\\underline{c}\end{array}\right][\textsc{initial}]$  by two sets: 
$$
 \left[ \begin{array}{l}\textsc{delimiter}\,\underline{a}\\\textsc{delimiter}\,\underline{b}\\\textsc{delimiter}\,\underline{c}\end{array}\right][\textsc{initial}]
$$
and its complement. 

It is a standard calculation using the Pontryagin duality to check that the characteristic functions of the sets above are all elements of $\qring {\wh{X}}$.

Now we define a dynamical software for $(X)$. The Initial set $I$ is the union of
$$
 \left[ \begin{array}{l}\textsc{delimiter}\,\underline{a}\\\textsc{delimiter}\,\underline{b}\\\textsc{delimiter}\,\underline{c}\end{array}\right][\textsc{initial}]
$$
over all $a,b,c\in A(M)$, the Accepting set is defined to be $A=[][\textsc{accept}]$ and the Rejecting set is the union of $[][\textsc{reject}]$ and all the sets $[][\si]$, where $\si\in \cal S-S(M)$.

The assignment of elements of $\Ga$ is as follows. On the set 
$$\left[ 
\begin{array}{l}\underline{a}\\\underline{b}\\\underline{c}\end{array}\right]
[\si],
$$
where  $a,b,c\in\cal A$ and $\si\in S(M)$, we act with $\ga(\si,\tau)$, where $\tau$ is such that the transition table of $M$ requires changing the state to $\tau$ when being in the state $\si$ and encountering the symbols $a,b,c$ on the first, second and third tape. Everywhere else, i.e. when $\si\in\cal  S\smallsetminus S(M)$, we act with the identity element of $\Ga$. 

The system just defined stops on any configuration because $M$ is foolproof. It does not restart because of the standing assumption on Turing machines that it is impossible to enter the state \textsc{initial} and that the state \textsc{initial} is left immediately.

Finally $(X,T_X)$ has disjoint accepting chains because of the following observation. Let $x$ be an element of the fundamental set. In particular on all three tapes it is of the form  $(\textsc{delimiter}\, \underline{a})$. Since with probability $1$ there is another \textsc{delimiter} sign on each tape, we can as well assume that $x$ is of the form $(\textsc{delimiter} \,\underline{a}\ldots \textsc{delimiter})$, with no delimiter symbol in ''$\ldots$``. The subsequent iterations of $T_X$ evaluated on $x$ ''move the underlinings`` on the tapes, but they cannot move them beyond the delimiter symbols. Thus whatever is the image of $x$ in the accepting set, we can recover from this image the original configuration $x$. This means, that the accepting chains are disjoint.

Let $H(M):= \ckl{2}^m \rtimes \aut(\ckl{2}^m)$, where the semi-direct product is with respect to the Pontryagin-dual action. The previous theorem gives us an operator $T(M) \in \qring {(\ckl{2}^n\wre\ckl{})^3\times H(M)}$ such that  $\dimvn\ker T(M) = \mu(I) - \Om_1(T_X)$. However, since $(X,T_X)$ stops on any configuration, the right-hand side is precisely the measure of those points in  
$$\left[ \begin{array}{l}\textsc{delimiter}\,\underline{a}\\\textsc{delimiter}\,\underline{b}\\\textsc{delimiter}\,\underline{c}\end{array}\right][\textsc{initial}]
$$ 
which are mapped by some iteration of $T_X$ to $[][\textsc{reject}]$. Given a word which $\cal M$ rejects, we can produce a set of positive measure of such points. Conversely, a set of positive measure of such points must contain a configuration which on each tape contains \textsc{delimiter} symbols both to the left and to the right of the underlined symbols, and so we can produce from it an input which is rejected by $\cal F(M)$. 

Finally we note that $\ckl{2}^n\wre\ckl{}$ is isomorphic (in an algorithmic fashion with respect to $n$) to a subgroup of $\lamplighter$. The proposition follows after clearing the denominators in $T(M)$.
\end{proof}

We are now ready to prove our main result, Theorem \ref{thm-main-honestly}. We restate it for reader's convenience.

\begin{thm*}
The Kernel-over-$\Z[G]$ problem is undecidable for $G = (\lamplighter)^3$.
\end{thm*}

\begin{proof}
We show that if this was not the case then we could produce an algorithm which given a Turing machine $M$ decides whether there exists an input which $M$ accepts. The latter problem is well-known to be undecidable (see \cite{sipser_introduction_to_the_theory_of_computation}).

Starting with $M$, we can algorithmically produce the read-only foolproof machine $\cal F(M)$ on three tapes from Proposition \ref{prop_machine_2_read_only} with the property that there exists a word which $M$ accepts if and only if there exists a word which $\cal F(M)$ rejects.

Now, thanks to Proposition \ref{prop_embed_turing_in_group_ring} we can algorithmically produce a finite group $H(\cal F(M))$ and 
$$T(\cal F(M)) \in \zring{ (\lamplighter)^3\times H(\cal F(M))}$$ 
such that $\dimvn\ker T(\cal F(M))\neq 0$ if and only if there exists a word which $\cal F(M)$ rejects.

But note that $\zring{ (\lamplighter)^3\!\times\! H(\cal F(M))}$ is isomorphic  to $\zring{ (\lamplighter)^3}\otimes \zring{ H(\cal F(M))}$. Furthermore, the von Neumann trace corresponds to the product of von Neumann traces. However, $\zring {H(\cal F(M))}$ can be algorithmically embedded into $M_k(\Z)$  in a von Neumann trace-preserving fashion, by using the left regular representation. Therefore we get an embedding $j_M\colon\zring{ (\lamplighter)^3}\otimes \Z[ H(\cal F(M))] \to M_k(\Z[\lamplighter)^3])$ which preserves von Neumann traces. The latter property implies that $\dimvn\ker S = 0 \iff \dimvn\ker j_M(S) = 0$ (see e.g. \cite[Lemma 1.9]{arxiv:grabowski-2010}). 
%
%
%
%
%

To recap, we have algorithmically produced an element $j_M(T(\cal F(M))) \in M_k(\Z[\lamplighter)^3])$ such that there exists a word which $M$ accepts if $\dimvn\ker j_M(T(\cal F(M)))\neq 0$. This ends the proof. 


\end{proof}

%
%

\section{Zero-divisor problem for $(\lamplighter)^4$}\label{section-lamplighter4}

We finish the article by proving the following corollaries.

\begin{cory}\label{cory-zero-divisors}
 Zero-divisors-in-$\Z[(\lamplighter)^4]$ is undecidable.
\end{cory}
It is of some interest to have finitely presented examples, so we point out the following corollary. 

\begin{cory}\label{cory-zero-divisors-fp}
 Let $G$ be a group given by the presentation 
$$
	\langle a,t,s\,|\,a^2=1,[t,s]=1,[t^{-1}at,a]=1,s^{-1}as=at^{-1}at\rangle.
$$
Then $G^4$ is metabelian, all torsion elements are of order $2$, its word problem is decidable, but Zero-divisors-in-$\Z[G^4]$ is undecidable.
\end{cory}

\begin{remarks}
Recall that the torsion problem for $G$ is the algorithmic problem whose input is a word $w$ in the generators and the question is whether $w$ represents an element of finite order in $G$.

Note that the decidability of Zero-divisors-in-$\Z[G]$ implies the decidability of the torsion problem, because $g\in G$ is a torsion element if and only if  $1-g\in \zring{G}$ is a zero-divisor.  Therefore, the corollaries above are interesting only when we note that the torsion problem is decidable for $(\lamplighter)^4$ and $G^4$.
\end{remarks}

\begin{proof}[of Corollary \ref{cory-zero-divisors}]
The matrix algebra $M_n(\Z[G])$ is naturally isomorphic to $\Z[G]\otimes M_n(\Z)$. Under this isomorphism the von Neumann trace $\trvn$ corresponds to $\trvn \otimes \tr$.

\smallskip\textit{Claim.}\label{lemma_embed_matrix_rings} There exists an algorithm which given $n$ produces an embedding $i_n$ of the matrix algebra $M_n(\Z)$ into the group ring $\Q[\lamplighter]$ such that  $\frac{1}{2^{n+2}} \tr_{vN}\circ i_n = \tr$ and such that $i_n$ preserves taking adjoints. 

\smallskip\textit{Proof of Claim.} Let $\chi_j \in\qring\lamplighter$ be the Fourier transform of the characteristic function of $[0\, 1^{j-1}\, \underline{1}\, 1^{n-j}\, 0]$ (see the previous section for the notation). Let $E_{ij} := t^{i-j}\chi_j\in 
\qring\lamplighter$. It is enough to check that $E_{kl}\cdot E_{ij} = \de_i^l \cdot E_{kj}$. Note
$$
E_{kl}\cdot E_{ij} = t^{k-l}\chi_l \cdot t^{i-j}\chi_j = t^{k-l}\chi_l \cdot \chi_i t^{i-j},
$$
which is non-zero only if $i=l$; in this case it is equal to
$$
t^{k-i}\chi_i t^{i-j} = t^{k-i}t^{i-j} \chi_j =t^{k-j} \chi_j,
$$
as claimed. The statement about the traces follows by noting that the measure of the set $[0\, 1^{j-1}\, \underline{1}\, 1^{n-j}\, 0]$ is equal to $\frac{1}{2^{n+2}}$.{\hfill$\Box$}

\smallskip
Altogether, there is an algorithm which for a given $n$ produces an embedding $j_n$ of $M_n(\Z[(\lamplighter)^3])$ into $\Z[(\lamplighter)^4]$. Furthermore $j_n$ scales the von Neumann trace and preserves adjoints. Using \cite[Lemma 1.9]{arxiv:grabowski-2010} we deduce that $T\in M_n(\Z[(\lamplighter)^3])$ is an $l^2$-zero-divisor if and only if $j_n(T)\in\Z[(\lamplighter)^4]$ is an $l^2$-zero-divisor. 

Since $\lamplighter$ is amenable, in both algebras being a zero-divisor is equivalent to having non-trivial $l^2$-kernel, and the corollary follows.
\end{proof}

\begin{proof}[of Corollary \ref{cory-zero-divisors-fp}]
All the properties follow from \cite{Grigorchuk_Linnel_Schick_Zuk}. In particular it is proven there that $\lamplighter$ embeds into $G$ (similar embeddings of wreath products were considered earlier by Baumslag, see for example \cite{baumslag-a-finitely-presented-metabelian-group-with-a-free-abelian-derived-group-of-infinite-rank}).
%
\end{proof}

\appendix
\section{Sofic groups and a computable bound on spectral densities}
Sofic groups were introduced in  \cite{gromov_endomorphisms_of_symboliv_algebraic_varieties}. The article  \cite{Pestov_Hyperlinear_and_sofic_groups_a_brief_guide} is a very readable survey. 

We start with some notation. Given a graph $K$, the set of vertices of $K$ is denoted by $V(K)$. 
The Hilbert space spanned by $V(K)$ is denoted by $l^2(K)$.
The elements of the standard basis of $l^2(K)$ are denoted by $\zeta_v$, $v\in V(K)$.  The ball of radius $R$ at $v\in V(K)$ is denoted by $B_K(v,R)$.

If $K$ is oriented and edge-labelled by complex numbers then the associated convolution operator is the unique operator $T\colon l^2(K) \to l^2(K)$ such that $\langle T(\zeta_x), \zeta_y\rangle$ is equal to $0$ if there are no edges between $x$ and $y$, and to the sum of all the labels of edges from $x$ to $y$ otherwise.

If $G$ is a group, and $(g_i) = (g_1,\ldots, g_n)$ is a symmetric sequence of elements of $G$ (i.e for every $g\in G$ the number of times $g$ appears is the same as the number of times $g^{-1}$ appears) which generates $G$, the Cayley diagram of $G$ with respect to $(g_i)$, denoted $\cal C(G, g_i)$, is the oriented labelled graph whose vertices are elements of $G$ and with an oriented edge with label $g_i$ from $a$ to $b$ if $ag_i=b$.

We will now define sofic groups. Let $G$  be a finitely generated group, and $(g_1, \ldots, g_n)$  a symmetric generating sequence.  Let $K$ be a finite oriented graph  edge-labelled by the sequence $(g_i)$ in such a way that 
\begin{enumerate}
\item at every vertex the out- and in-degrees are at most $n$ and each of the symbols $g_i$ appears at most once as the out- and at most once as the in-label; 
\item the number of edges labelled by a given $g_i$ from $v$ to $w$ is equal to the number of such edges from $w$ to $v$ (and by the previous assumption it is either zero or one). 
\end{enumerate}

For $\eps\ge 0$, $R>0$, we say that $K$ is an $(\eps, R)$-sofic approximation of $G$ with respect to the sequence $(g_i)$ if the set 
$$
\{v\in V(K): B_K(v,R) \text{ is isomorphic to } B_{\cal C(G, g_i)}(e, R)\}
$$
has at least $(1-\eps)\cdot |V(K)|$ elements. The isomorphism is meant in the sense of edge-labelled graphs.

The group $G$ is \textit{sofic} if for every $\eps$ and $R$ there exists an $(\eps, R)$-sofic approximation of $G$ with respect to $(g_i)$ (this definition does not depend on the choice of a generating sequence). A general countable group is sofic if all its finitely generated subgroups are sofic.

Suppose $T\in \C [G]$ can be written as $\sum_{i=1}^n a_ig_i\in \C [G]$.  We define the graph $K(T)$ by taking $K$ and replacing each label $g_i$ by $a_i$. Note that $K(T)$ depends on the choice of the representation for $T$ as a sum, not only on $T$, but this will not lead to any problems. 

Let  $\pi_K(T)\colon l^2(K)\to l^2(K)$ be the convolution operator on $K(T)$. Note that if $\sum_i |a_i| \le 1$ then the operator  norm of $\pi_K(T)$ is at most $1$.

\smallskip
%

 The author learned about the following proposition from Andreas Thom. Since it does not seem to be in the literature (but compare \cite{thom_sofic_groups_and_diophantine_approximation}, \cite{Elek_Szabo:Hyperlinearity_eseentially_free_actions_and}, and the proof of Lemma 3.179 in \cite{Lueck:Big_book}), we give a proof based on the proof of the L\"{u}ck's approximation theorem (the latter originally proven in \cite{lueck_approximating_l2_invariants_by_their_finite_dimensional_analogues}). 

\begin{proposition}\label{prop-concrete-bound}
Let $G$ be a sofic group and let $T\in \qring G$ be a positive self-adjoint element whose sum of coefficients is smaller than $1$. Suppose that the smallest common multiple of the denominators of the coefficients of $T$ is equal to $C$. Then 
$$
 |\tr ((1-T)^n) - \mu_T(\{0\})| < \frac{3 C}{\log(n)}.
$$
\end{proposition}

\begin{proof}
 Let $\eps(n) = \frac{C}{\log(n)}$. Note that $\eps(n^k)=\frac{\eps(n)}{k}$.  Let $T=\sum_{i=1}^n a_i g_i$; without any loss of generality we can suppose that $G$ is generated by the symmetric sequence $(g_1,\ldots, g_n)$. Let $K_n$ be an $(\eps(n),n)$-sofic approximation of $G$ for the sequence $(g_i)$, and let $\pi_n (T)= \pi_{K_n}(T)$ be the associated convolution operator on $l^2(K)$. Let $\mu_n$ be the (normalized) spectral measure of $\pi_n(T)$ and let $\mu$ be the spectral measure of $T$.

In this proof all integrals are over the interval $[0,1]$, unless explicitly stated otherwise.

\medskip\textit{Claim.} For all $m,n$ we have $\mu_m( (0,\frac{1}{n})) < \eps(n)$. 

\textit{Proof of Claim.} Consider the characteristic polynomial of the matrix of $\pi_m(T)$ in the standard basis, divided by the monomial $X^f$, where $f$ is the multiplicity of the eigenvalue $0$. Its coefficients are rational numbers with denominators at most $C^{|K_m|}$, and its roots are precisely the non-zero eigenvalues of $\pi_m(T)$. Therefore
$$
| \prod \al| \ge \frac{1}{C^{|K_m|}},
$$
where the product is over non-zero eigenvalues of $\pi_m(T)$. By estimating the roots smaller than $\frac{1}{n}$ by $\frac{1}{n}$, and the rest by $1$, we get 
$$
\frac{1}{C^{|K_m|}} \le \left( \frac{1}{n} \right)^{\mu_m( (0,\frac{1}{n})) \cdot |K_m|},
$$
and so
$$
\frac{1}{C} \le \left( \frac{1}{n} \right)^{\mu_m( (0,\frac{1}{n}))},
$$
from which the claim follows by taking logarithms.{\hfill$\Box$}\smallskip

\medskip\textit{Claim.} For all $n$ we have $\mu((0,\frac{1}{n}))<\eps(n)$.

\textit{Proof of Claim.} We first show that for any continuous function $f$ we have 
$$
\lim_{m\to\infty} \int f(x)\,d\mu_m(x) = \int f(x)\,d\mu(x),
$$
i.e. that the measures $\mu_m$ converge weakly to $\mu$. By Weierstrass approximation it is enough to show it for a monomial $x^n$. In this case we have
$$
|\int x^n\,d\mu(x) - \int x^n\,d\mu_m(x)| = |\tr_{vN} T^n - \frac{1}{|V(K_m)|}\tr \pi_m (T)^n|, 
$$
and the right hand side is equal to 
$$
| \langle T^n\zeta_e, \zeta_e\rangle - \frac{ \sum_{v\in V(K_m)} \langle \pi_m(T)^n\zeta_v,\zeta_v\rangle}{|V(K_m)|}|.
$$
Suppose $m>n$. Then apart from $\eps(m)\cdot |V(K_m)|$ vertices, we have $\langle T^n\zeta_e, \zeta_e\rangle = \langle \pi_m(T)^n\zeta_v,\zeta_v\rangle$, since both quantities depend only on the ball of radius $n$ around the relevant vertex. Noting this and estimating both $\langle T^n\zeta_e, \zeta_e\rangle$ and $\langle \pi_m(T)^n\zeta_v,\zeta_v\rangle$ by $1$ we get that the above is smaller or equal to $2\cdot \eps(m)$.

The claim now follows from the weak convergence -  let $\de$ be such that $\mu([\de, \frac{1}{n}-\de])$ is almost equal to $\mu((0,\frac{1}{n}))$, and take $f$ to be a continuous approximation of the characteristic function of $(0,\frac{1}{n})$ such that $f(0)=f(\frac{1}{n})=0$,  $f(x)=1$ for $x\in [\de, \frac{1}{n}-\de]$, and $f$ is linear on $(0,\de)$ and $(\frac{1}{n}-\de,\frac{1}{n})$.{\hfill$\Box$}
\smallskip
 
We finally show that  $|\tr_{vN} (1-T)^n - \dimvn\ker \pi(T)| < 3\cdot \eps(n).$ The left hand side is equal to
$$
  \int_{(0,1]} (1-x)^n d\mu(x) = \int_{(0,\frac{1}{\sqrt{n}})} (1-x)^n d\mu(x)+ \int_{[\frac{1}{\sqrt{n}},1]} (1-x)^n d\mu(x)
$$
We estimate the first integrand by $1$, and the second by $(1-\frac{1}{\sqrt{n}})^n$ to get that the above is equal to at most 
$$
\mu((0,\frac{1}{\sqrt{n}})) + (1-\frac{1}{\sqrt{n}})^n,
$$
which by the previous claim and a simple calculation is not bigger than
$$
\eps(\sqrt{n}) + (\frac{2}{e})^{\sqrt{n}} < 3\cdot\eps(n).
$$
\end{proof}



\bibliographystyle{alpha}
\bibliography{bibliografia}
\end{document}